\newcolumntype{2}{D{.}{}{2.0}}
\DeclareMathOperator{\rank}{\mathrm{rank}}
\DeclareMathOperator{\diver}{\mathrm{div}}
\newtheorem{theorem}{Theorem}[section]
\newtheorem{defi}{Definition}[section]
\newtheorem{prop}{Proposition}[section]
\newtheorem{corollary}{Corollary}[section]
\newtheorem{lemma}{Lemma}[section]
\numberwithin{equation}{section}
\title{Surfaces of genus $g\geq 1$ in 3D contact sub-Riemannian manifolds}
\author[1,2]{Eugenio Bellini}
\author[3]{Ugo Boscain}
\affil[1]{SISSA, Via Bonomea 265, 34136 Trieste, Italy. eugenio.bellini@outlook.it}
\affil[2]{Dipartimento di Matematica e Applicazioni, Università degli Studi di Milano-Bicocca, Milano, Italy.}
\affil[3]{CNRS, Laboratoire Jacques-Louis Lions, Sorbonne
Université, Université de Paris, Inria, Boîte courrier 187, 75252
Paris Cedex 05 Paris, France. ugo.boscain@sorbonne-universite.fr}
\date{}
\begin{document}
\maketitle
\begin{abstract}
 We consider smooth embedded surfaces in a 3D contact sub-Riemannian manifold and the problem of the finiteness of the induced distance (i.e., the infimum of the 
 length of horizontal curves that belong to the surface). Recently it has been proved that for a surface having the topology of a sphere embedded in a tight co-orientable structure, the distance is always finite. In this paper we study closed surfaces of genus larger than 1, proving that such surfaces can be embedded in such a way that the induced distance is finite or infinite. We then study the structural stability of the finiteness/not-finiteness of the distance.
\end{abstract} 
\section{Introduction}
Consider a three dimensional contact sub-Riemannian manifold $(M,\mathcal D, \bf{g})$, where $M$ is a smooth manifold of dimension three, $\mathcal D$ is a contact distribution (i.e., such that $\rank{\mathcal D}=2$ and $\mathcal D + \left[\mathcal D,\mathcal D\right]=TM$) and $\bf{g}$ is a sub-Riemannian metric \cite{1, 12}. 
Let $S$ be a smooth surface embedded in $M$. The intersection $\mathcal D\cap TS$ defines a field of directions with singularities on $S$, where the singularities are the points $q\in S$ such that $\mathcal D_q=T_qS$. Such points are called \emph{characteristic points}. In the following the set of characteristic points is denoted with $\Sigma (S)$. The set of integral curves of $\mathcal D\cap TS$ on $S\setminus \Sigma(S)$ is called the \emph{characteristic foliation} of $S$, and it is denoted with $\mathcal D S$. For an example of characteristic foliation of a surface in the Heisenberg group see Figure \ref{sphere_H}.
\begin{figure}[H]
\begin{center}
    \includegraphics[width=12cm]{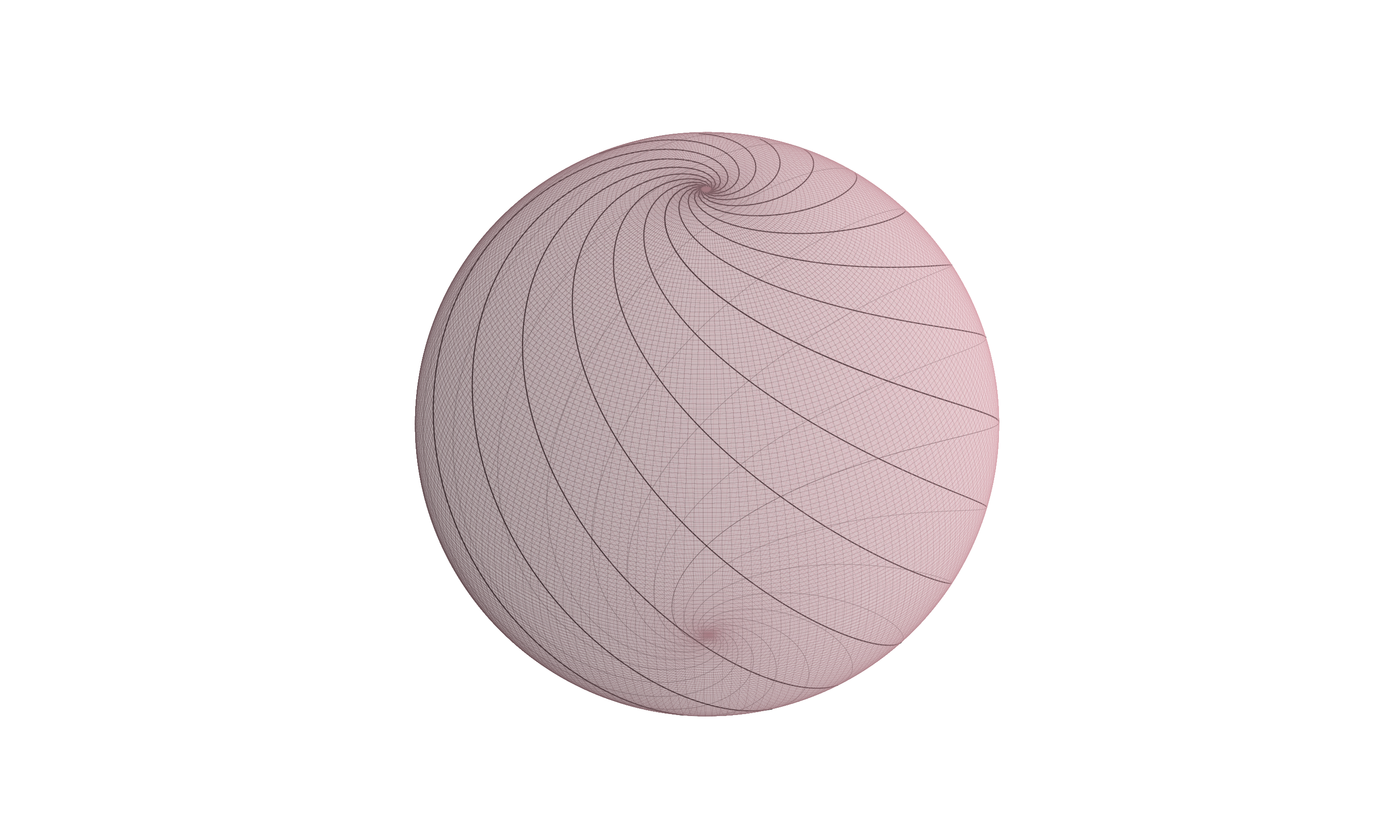}
\end{center}
\caption{Characteristic foliation of the Euclidean unit sphere centered at the origin, embedded in the Heisenberg group $(\mathbb R^3, \mathcal D=\text{span}\{\partial_x+\frac{y}{2}\partial_z, \partial_y-\frac{x}{2}\partial_z\})$.}
\label{sphere_H}
\end{figure}
Each leaf of the characteristic foliation on $S\setminus\Sigma(S)$ is a one-dimensional Riemannian manifold, the metric being the restriction of $\bf{g}$. A lipschitz curve $\gamma:[0,T]\to S$ is called \emph{admissible} if $\dot\gamma(t)\in\mathcal D_{\gamma(t)}$ for almost every $t\in[0,T]$.
We can use the metric $\bf{g}$ to measure the length of admissible curves on $S$
\begin{equation*}
L(\gamma):=\int_{0}^T\sqrt{{\bf{g}}(\dot\gamma(t),\dot\gamma(t))}\,dt,
\end{equation*}
and we define the \emph{induced distance} between two points $q_0, q_1\in S$ as 
\begin{equation}
    d_{S}(q_0,q_1)=\inf\{\,L(\gamma)\,:\,\gamma\,\,\text{admissible},\,\gamma(0)=q_0,\,\gamma(T)=q_1\,\},
\end{equation}
with the understanding that the induced distance is infinite whenever there is no admissible curve connecting $q_0$ to $q_1$. 
In this paper we are interested in studying the finiteness of $d_S$. 
We say that the induced distance 
is finite if it is finite for every pair of points, in that case the couple $(S,d_S)$ defines a metric space and in particular a length space. Notice that this distance does not coincide with the restriction of the sub-Riemannian distance on $S$, moreover $d_S$ is never continuous with respect to the topology of the surface \cite{5}. 
If the sub-Riemannian manifold admits a global orthonormal frame $(F_1,F_2)$, or equivalently if $\mathcal D$ is a trivial vector bundle over $M$, then the problem of finding the 
curve realizing the distance between two points $q_0$ and $q_1$ can be written as the optimal control problem
\begin{align}
&\dot\gamma(t)=u_1(t)F_1(\gamma(t))+u_2(t)F_2(\gamma(t)),\nonumber\\
&\gamma(0)=q_0,~~~\gamma(T)=q_1,\nonumber\\
&\int_0^T \sqrt{u_1^2(t)+u_2(t)^2}\,dt\to\min,\nonumber\\
&\mbox{with the state constraint $\gamma(t)\in S$.}\nonumber
\end{align}

In the study of contact distributions on 3-manifolds from a topological perspective, embedded surfaces play a central role \cite{6, 10, 11}. 
Recently related topics have been studied when the contact distribution is endowed with a Riemannian metric (surfaces embedded in 3D contact sub-Riemannian manifolds). See for instance \cite{7,8} for Carnot groups, \cite{5} for generic structures, \cite{2,3,15} for Gauss-Bonnet theorems
and \cite{4} for stochastic evolution equations. 
In particular in \cite{5} the authors proved that the induced distance is always finite for spheres embedded in a co-oriented tight  3D-contact sub-Riemannian manifolds. Recall that a 3D-contact sub-Riemannian manifold $(M,\mathcal D,\bf{g})$ is co-orientable if the distribution $\mathcal D$ can be globally expressed as the kernel of a 1-form $\omega$. This form is called the contact form, and in the context of sub-Riemannian geometry is normalized in such a way that
\begin{equation}\label{normalized_form}
d\omega_{|\mathcal D}=\text{Vol}_\textbf{g},
\end{equation}
where $\text{Vol}_\textbf{g}$ is the area form induced on $\mathcal D$ by the metric $\bf{g}$.
The contact condition $\mathcal D+[\mathcal D,\mathcal D]=TM$ can be expressed in term of the contact form $\omega$ as $\omega\wedge d\omega \neq 0$, thus a co-orientable contact manifold $M$ is 
necessarily orientable.
Recall moreover that a contact distribution is called overtwisted (see 4.5 of \cite{9}) if admits an overtwisted disk, i.e., an embedding of a disk with horizontal boundary whose characteristic foliation has a unique singular point: an elliptic point in the interior of the disk, see Figure \ref{ot-disk}.
\begin{figure}[H]
\begin{center}
    \includegraphics[width=11cm]{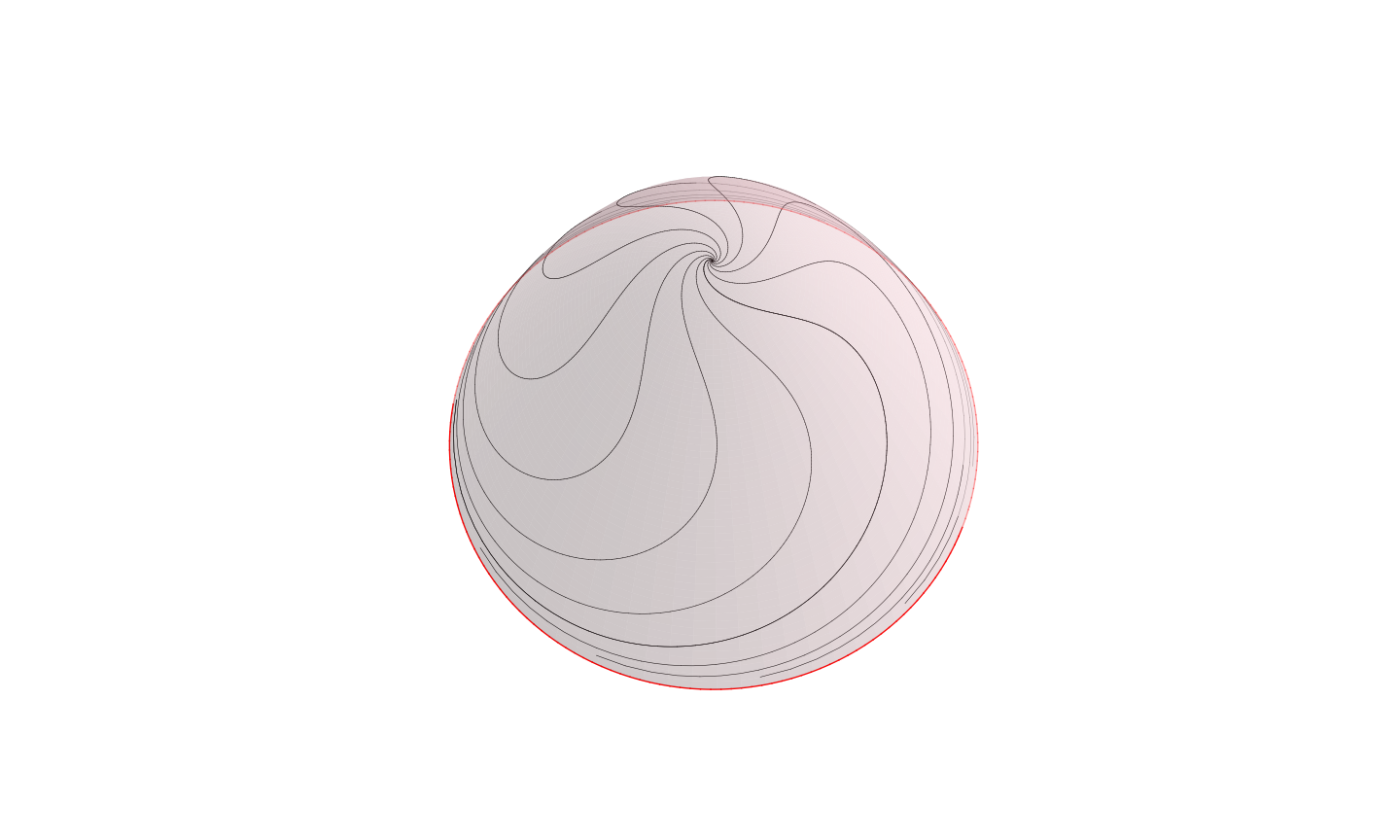}
\end{center}
\caption{The image depicts the overtwisted disk $\{\,(r,\theta,z)\,:\,z=\frac{1}{2}(\pi^2-r^2),\,r\leq \pi,\,\theta \in [0,2\pi)\,\}$ in the contact structure $(\mathbb R^3, \mathcal D=\text{span}\{\partial_r, \frac{\cos r}{r}\partial_\theta-\sin r\partial_z\})$, expressed in cylindrical coordinates $(r,\theta, z)$. Notice that, even though the field $\frac{\cos r}{r}\partial_\theta-\sin r\partial_z$ is not continuous in $r=0$, the distribution $\mathcal D=\ker\{\cos rdz+r\sin rd\theta\}$ is well defined.}
\label{ot-disk}
\end{figure}
A contact distribution is called tight if it is not overtwisted. Since every overtwisted contact structure contains a sphere $S$ having closed orbits in its characteristic foliation, which necessarily implies that $d_S$ is not finite, the result obtained in \cite{5} can be stated as a metric characterization of tightness.
\begin{theorem}\label{thm-tightness}\cite{5}
    Let $(M,\mathcal D,\bf{g})$ be a co-oriented $3D$-contact sub-Riemannian manifold, then $(M,\mathcal D)$ is tight if and only if every embedded surface homemorphic to a sphere has finite induced distance.
\end{theorem}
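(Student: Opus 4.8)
The plan is to prove the two implications separately. The implication ``tight $\Rightarrow$ every embedded sphere has finite $d_S$'' is exactly the finiteness result of \cite{Daniele} recalled above, so for that direction I would simply invoke it; the whole of the work lies in the converse, which I would establish in contrapositive form: if $(M,\mathcal D)$ is overtwisted, I produce \emph{one} embedded sphere $S$ with $d_S$ not finite. The argument rests on two independent ingredients: (A) a metric-free ``barrier'' statement, namely that a closed orbit of the characteristic foliation forces $d_S=\infty$; and (B) a construction, starting from an overtwisted disk, of an embedded sphere whose characteristic foliation $\mathcal D S$ has a closed orbit.

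For ingredient (A), let $\Gamma$ be a closed leaf of $\mathcal D S$. Since $\Gamma\subset S\setminus\Sigma(S)$ and $\Sigma(S)$ is closed, $\Gamma$ is a compact curve admitting a neighbourhood $U\subset S$ free of characteristic points, on which $\mathcal D\cap TS$ is a genuine nonvanishing line field whose integral curves are precisely the leaves of $\mathcal D S$. By the Jordan curve theorem $\Gamma$ separates the sphere, $S\setminus\Gamma=D_1\sqcup D_2$ with $D_1,D_2$ open disks. Any admissible $\gamma$ satisfies $\dot\gamma\in\mathcal D\cap TS$ a.e., so in $U$ it is a reparametrised integral curve of the line field and therefore stays inside a single leaf; as distinct leaves are disjoint, a piece of $\gamma$ in $U\cap D_1$ can neither reach $\Gamma$ nor cross to $D_2$. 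Because any path from $D_1$ to $D_2$ must meet $\Gamma$, and near each point of $\Gamma$ an admissible curve is confined to a leaf disjoint from $\Gamma$, no admissible curve joins a point of $D_1$ to a point of $D_2$. Hence $d_S(q_0,q_1)=\infty$ for $q_0\in D_1$, $q_1\in D_2$, and this conclusion does not involve the metric $\mathbf{g}$ at all, consistently with the fact that tightness is a property of $(M,\mathcal D)$.

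For ingredient (B), I start from an overtwisted disk $D$, which exists by the definition of overtwistedness. Its boundary $\gamma=\partial D$ is horizontal, so $T\gamma\subset\mathcal D$, and $\gamma$ is a closed leaf of $\mathcal D D$, the unique singular point being the interior elliptic point. The plan is to cap $D$ off on the side opposite to the elliptic point: choose an embedded disk $E\subset M$ with $\partial E=\gamma$, meeting $D$ only along $\gamma$ and with $T_qE=T_qD$ for every $q\in\gamma$, so that $S:=D\cup_\gamma E$ is a smoothly embedded sphere. Because the tangent planes agree along $\gamma$, one has $\mathcal D\cap T_qS=\mathcal D\cap T_qD=T_q\gamma$ for $q\in\gamma$, so $\gamma$ persists as a closed orbit of $\mathcal D S$; applying (A) with $\Gamma=\gamma$ gives $d_S=\infty$. (One could instead invoke directly the contact-topological fact, e.g. from \cite{Geiges}, that an overtwisted structure carries an embedded sphere with a closed leaf; I prefer the explicit capping because it keeps the construction inside an arbitrarily small neighbourhood of $D$.)

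The delicate point is step (B): producing the cap $E$ as an embedded disk with the prescribed $1$-jet along $\gamma$ and disjoint from $D$, so that the union is a genuinely embedded $C^1$ sphere and $\gamma$ survives as a regular closed leaf rather than being absorbed into newly created characteristic points. I would handle this by a tubular-neighbourhood and normal-pushoff argument: extend $D$ slightly across $\gamma$ to a surface germ with the correct tangent planes, close it up with any embedded disk inside a small ball, and smooth the seam, using genericity to keep the new characteristic points (which must appear on $S^2$ by Poincaré--Hopf) away from $\gamma$. It is worth stressing why the same recipe fails in a tight structure: a closed boundary leaf bounding a disk has vanishing Thurston--Bennequin invariant, which the Bennequin inequality \cite{bennequin} forbids for a Legendrian unknot, so the capping that yields a closed orbit is possible precisely because the overtwisted disk furnishes a Legendrian unknot with $tb=0$. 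This is the conceptual reason the equivalence holds, and why the content of \cite{Daniele} is exactly the nontrivial converse direction.
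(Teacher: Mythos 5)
Your proposal is correct and follows essentially the same route as the paper, which states this theorem as a citation: the ``tight $\Rightarrow$ finite'' direction is simply quoted from \cite{Daniele}, and the converse is justified by the single remark that every overtwisted structure contains an embedded sphere with a closed orbit in its characteristic foliation, which forces the induced distance to be infinite. Your ingredients (A) and (B) are correct elaborations of exactly that remark --- the Jordan-curve/flow-box argument for (A) is sound, and in (B) your fallback construction (extending $D$ across $\gamma$ and smoothing the seam away from $\gamma$, rather than the bare $C^1$ capping) is the right way to keep $\gamma$ a genuine closed leaf of a smooth embedded sphere.
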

In this paper we prove that the characterization given by Theorem \ref{thm-tightness} works only with spheres. In particular we obtain the following result.
\begin{corollary}
Let $(M,\mathcal D,\bf{g})$ be a co-orientable 3D-contact sub-Riemannian structure. Let $S$ be a closed orientable surface of genus $g\geq 1$. Then there exist an embedding of $S$ in $M$ for which the induced distance is finite, and one for which the induced distance is not finite.
\end{corollary}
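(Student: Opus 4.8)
The plan is to treat the two embeddings separately, after one preliminary reduction. Since $d_S$ is defined purely through curves lying on $S$, it depends only on the restriction of $\mathcal D$ and $\mathbf g$ to $S$, hence only on the characteristic foliation $\mathcal D S$ together with the metric it induces on its leaves. In particular I may build $S$ inside a fixed Darboux ball $B\subset M$, where $\mathcal D=\ker\omega$ for $\omega$ the standard contact form and where $(B,\mathcal D)$ is tight, without losing generality. The one structural remark that drives everything is that on $S\setminus\Sigma(S)$ the bundle $\mathcal D\cap TS$ is a genuine line field, so every admissible curve is forced to run inside a single leaf of $\mathcal D S$; this is independent of $\mathbf g$.

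For the non-finite embedding I would realize $S$ as a connected sum of $g$ tori of revolution about the $z$-axis of the chart. A surface of revolution $(r(s)\cos\theta, r(s)\sin\theta, z(s))$ has characteristic direction proportional to $\dot z\,\partial_\theta+\tfrac12 r^2\,\partial_s$, so as long as the profile avoids the axis ($r>0$) one has $\omega(\partial_\theta)=-\tfrac12 r^2\neq 0$ and the torus carries \emph{no} characteristic point. Choosing the profile $r=R+\rho\cos s,\ z=\rho\sin s$ with $R>\rho>0$ tuned so that the total winding $\oint 2\dot z/r^2\,ds$ is an integer multiple of $2\pi$, every leaf on each torus closes up. Performing the connected sums through small disks chosen to miss an open band of these closed leaves preserves that band on the genus $g$ surface. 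Two points lying on distinct leaves of such a band cannot be joined by any admissible curve, there being no characteristic point through which to switch leaf, so $d_S=+\infty$; note this conclusion uses only $\mathcal D$, never $\mathbf g$.

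For the finite embedding I would instead realize a gradient-like characteristic foliation. I start from the round sphere, whose foliation has exactly two elliptic points and finite induced distance by \cite{Daniele}, and attach $g$ handles, each carrying a standard ``flow-through'' model with two hyperbolic points and no recirculation. Poincaré--Hopf is respected, since the singular set then consists of one source, one sink and $2g$ saddles, of total index $2-2g=\chi(S)$. The point of the model is that the foliation is Morse--Smale and admits a strict Lyapunov function decreasing along its leaves, so it has no closed orbit and no cycle of saddle connections, and every leaf limits to singular points at both ends. Finiteness of $d_S$ then follows by the same local analysis used for the sphere in \cite{Daniele}: near each elliptic and each hyperbolic point one transfers between nearby leaves at finite cost, regular leaf segments have finite length, and the gradient-like structure glues these local statements into global finiteness.

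The main obstacle is the finite case. Producing the handles explicitly in Darboux coordinates while \emph{certifying} that no closed orbit or saddle-connection cycle is created is the delicate point; here I would appeal to the flexibility of characteristic foliations on convex surfaces (Giroux's realization lemma), which guarantees that a Morse--Smale foliation adapted to an admissible dividing set is carried by an embedded surface in the tight ball. The second, more quantitative, difficulty is to upgrade the qualitative gradient-like picture into the actual finiteness estimate: this requires reproving, in the present genus $g\geq 1$ setting and for a general metric $\mathbf g$, the local finite-cost leaf-transfer near elliptic and hyperbolic characteristic points underlying \cite{Daniele}. By contrast the non-finite case needs no metric control and is essentially complete once the band of closed leaves is exhibited.
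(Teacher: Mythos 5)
Your non-finite embedding is correct and is essentially the paper's own construction (Lemma \ref{infite-lemma} combined with Darboux's theorem): a torus of revolution in a Darboux chart has no characteristic points, and connected sums performed away from a band of closed leaves preserve those leaves; since an admissible Lipschitz curve avoiding $\Sigma(S)$ is trapped in a single leaf, the distance is infinite, independently of $\mathbf g$. The only difference is how the closed leaves are certified. You tune the profile so that the return map, which is the rigid rotation by $\oint 2\dot z/r^{2}\,ds$, has rotation number in $2\pi\mathbb{Z}$; this tuning is indeed possible, since for $r=R+\rho\cos s$, $z=\rho\sin s$ that integral equals $-4\pi\rho^{2}/(R^{2}-\rho^{2})^{3/2}$, which sweeps all of $(-\infty,0)$ as $\rho$ varies in $(0,R)$ (a verification you assert but do not carry out). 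The paper instead perturbs the torus to one with a Morse--Smale characteristic foliation, necessarily still non-singular, and notes that a non-singular Morse--Smale foliation must have a closed orbit. Both routes work; yours is more explicit.

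The finite embedding is where your proposal has a genuine gap, and in fact both difficulties you flag are resolved in the paper by much softer means than the ones you reach for. First, no convex-surface theory or Giroux realization is needed to build the embedding: by Proposition \ref{first-piece} (genericity of Morse--Smale characteristic foliations) any embedding of $S$ can be $C^{\infty}$-perturbed to one with Morse--Smale foliation, and the elimination lemma (Lemma \ref{elimination_lem}), applied in reverse, then destroys every closed orbit by inserting a pair of singular points along it via an arbitrarily $C^{0}$-small perturbation (Corollary \ref{third-piece}); there is no dividing set to control and no foliation to prescribe. Second, and more importantly, the step you describe as "reproving the local finite-cost leaf-transfer near elliptic and hyperbolic points" is not needed at all, and leaving it open makes your finite half incomplete. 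The missing idea is that admissible curves may pass \emph{through} characteristic points, where $T_qS=\mathcal D_q$, so one simply concatenates leaves at their common limiting critical points; no transfer between nearby regular leaves ever occurs. The only analytic input is Proposition 1.3 of \cite{Daniele} --- a leaf converging to a non-degenerate critical point has finite length --- which is local and independent of the genus or of the global picture. Granting that, the paper's Lemma \ref{second-piece} finishes softly: on the complement $S'$ of the saddles, every point flows forward or backward into an elliptic point, so each equivalence class of the relation "joinable by an admissible curve" contains the (open) basin of such a point; the classes are therefore open, $S'$ is connected, hence there is a single class, and saddles are attached to it along their separatrices. In short, your construction is heavier than necessary and your finiteness argument stops exactly where the actual proof begins.
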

The latter result is a corollary of the following theorem, stating that generically the finiteness (not finitess) of the distance is a structurally stable property.
\begin{theorem}\label{main}
Let $(M,\mathcal D,\bf{g})$ be a co-orientable 3D-contact sub-Riemannian manifold, let $S$ be a closed orientable surface of genus $g\geq 1$, and let $\textbf{Emb}(S,M)$ be the space of embeddings of $S$ in $M$ endowed with the $C^{\infty}$-topology. Then there exist two disjoint non empty open subsets, $\mathcal U_f,\mathcal U_{\infty}\subset \textbf{Emb}(S,M)$, having dense union, such that the induced distance is finite on $\mathcal U_f$ and not finite in $\mathcal U_{\infty}$.
\end{theorem}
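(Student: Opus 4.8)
The plan is to translate finiteness of $d_S$ into a property of the singular foliation $\mathcal{D}S$ together with its induced metric. The crucial observation is that, away from $\Sigma(S)$, an admissible curve on $S$ is everywhere tangent to $\mathcal{D}\cap TS$ and hence contained in a single leaf of $\mathcal{D}S$; a curve can pass from one leaf to another only at a characteristic point. Consequently $d_S(q_0,q_1)<\infty$ if and only if the leaves through $q_0$ and $q_1$ are joined by a finite chain of leaves meeting at points of $\Sigma(S)$, each step of finite length. Two facts drive the resulting dichotomy. First, at a non-degenerate characteristic point the field generating $\mathcal{D}S$ vanishes linearly, so any leaf limiting to it (in forward or backward time) does so with finite length; thus singular points are reachable and one may switch leaves at them at finite cost. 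Second, a closed orbit $C$ of $\mathcal{D}S$ carries no characteristic point, so any admissible curve issuing from a point of $C$ remains on $C$ forever; hence points of $C$ cannot be joined to points off $C$ and $d_S=\infty$. I would therefore aim to prove that, for an embedding whose characteristic foliation is Morse--Smale, $d_S$ is finite if and only if $\mathcal{D}S$ has no closed orbit.

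To produce $\mathcal{U}_\infty$ I would first build a single embedding whose characteristic foliation contains a hyperbolic closed orbit. Working in a Darboux chart and exploiting the handle afforded by $g\geq 1$, I would insert a surface piece carrying a homotopically nontrivial closed leaf running around the handle; this is where the hypothesis $g\geq 1$ is essential, since such a non-contractible closed orbit is compatible with any co-orientable structure (in particular tight ones), whereas on a sphere a closed orbit forces the kind of overtwisted configuration excluded by Theorem \ref{thm-tightness}. Making the orbit hyperbolic (nondegenerate Poincaré return map), it persists under $C^\infty$-small perturbations of the embedding, because $\mathcal{D}S$ depends smoothly on the embedding and hyperbolic periodic orbits are persistent; by the barrier argument $d_S=\infty$ on this whole neighborhood, which is the desired open set $\mathcal{U}_\infty$.

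For $\mathcal{U}_f$ I would realize an embedding whose characteristic foliation is Morse--Smale with no closed orbit, for instance a gradient-like foliation with $2$ elliptic and $2g$ hyperbolic points, consistent with $\chi(S)=2-2g$ by Poincaré--Hopf, and with no saddle connections; its realizability as a characteristic foliation follows from the flexibility of characteristic foliations. Finiteness then follows in two steps: the local analysis of \cite{Daniele} near elliptic and hyperbolic characteristic points, which is the purely local heart of Theorem \ref{thm-tightness} and hence valid in any genus, shows that one can traverse each singular point along admissible curves of finite length; and a global connectivity argument shows that on a connected surface the singular points and their separatrices form a connected reachability network once all recurrence is concentrated at the (hyperbolic) singularities. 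Since Morse--Smale foliations are structurally stable, the absence of closed orbits, and with it finiteness, persists on a $C^\infty$-neighborhood, giving the open set $\mathcal{U}_f$.

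Finally, density of $\mathcal{U}_f\cup\mathcal{U}_\infty$ I would obtain from genericity: using Thom transversality for the characteristic field together with a Peixoto-type argument, the embeddings whose characteristic foliation is Morse--Smale are dense in $\textbf{Emb}(S,M)$. Every such embedding either has a (necessarily hyperbolic) closed orbit, placing it in $\mathcal{U}_\infty$, or has none, placing it in $\mathcal{U}_f$ by the finiteness criterion; hence the union is dense, and disjointness is automatic since finiteness and non-finiteness exclude each other. The main obstacle I anticipate is the finiteness direction: proving that the global reachability network is connected genuinely requires the Morse--Smale hypothesis, because the Poincaré--Bendixson picture fails for $g\geq 1$ and one must rule out nontrivial minimal sets, and proving that finiteness is an \emph{open} condition requires transferring structural stability of the foliation back to the embedding. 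Establishing the genericity and realization of Morse--Smale characteristic foliations inside $\textbf{Emb}(S,M)$, rather than in the abstract space of foliations, is the other delicate point.
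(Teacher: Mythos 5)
Your overall architecture coincides with the paper's: a finiteness criterion for Morse--Smale characteristic foliations (distance finite if and only if there is no closed orbit, proved via finite length of leaves limiting to non-degenerate singular points plus a connectivity argument --- this is exactly Lemma \ref{second-piece}), genericity of Morse--Smale characteristic foliations to get density (Proposition \ref{first-piece}), structural stability to get openness, and the dichotomy ``closed orbit / no closed orbit'' to split the generic set into $\mathcal U_\infty$ and $\mathcal U_f$. Where you genuinely diverge from the paper is in the two non-emptiness steps, and both of your versions rest on unproved realization claims; these are the real gaps.

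For $\mathcal U_f$ you propose to realize, inside an arbitrary given contact manifold, a prescribed gradient-like foliation with $2$ elliptic and $2g$ hyperbolic points, citing ``flexibility of characteristic foliations.'' But realization inside a \emph{fixed} ambient structure is genuinely constrained --- for instance, on a sphere in a tight structure no foliation with a closed leaf is realizable, by Theorem \ref{thm-tightness} --- so flexibility cannot be invoked as a black box: Giroux's realization lemma requires a convex embedding and a foliation divided by its dividing set, neither of which you verify. The paper avoids realization altogether: it takes \emph{any} embedding, perturbs it to Morse--Smale, and then applies the elimination lemma (Lemma \ref{elimination_lem}, run in reverse) to break every closed orbit by a $C^0$-small isotopy introducing a pair of singular points along it (Corollary \ref{third-piece}). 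Symmetrically, for $\mathcal U_\infty$ your ``insert a surface piece carrying a homotopically nontrivial closed leaf around the handle'' asserts exactly what must be constructed; that such an orbit ``is compatible with any co-orientable structure'' is the content, not a proof. The paper's construction is concrete: the torus of revolution in the Heisenberg group has the explicitly computed, nowhere-vanishing characteristic field \ref{eq-X}, so a $C^\infty$-small Morse--Smale perturbation is still nonvanishing, and a Morse--Smale field with no singular points on a closed surface must have closed orbits by Definition \ref{def-Morse}; higher genus is then obtained by connected sum performed away from a closed leaf, and Darboux's theorem transports the example into any contact manifold (Lemma \ref{infite-lemma}). One point in your favour: your insistence that the closed orbit be hyperbolic, hence persistent, addresses a subtlety the paper leaves implicit, namely why the connected-sum surface (not claimed to be Morse--Smale) yields a nearby embedding lying in the \emph{open} set $\mathcal U_\infty$.
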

The proof of this theorem is largely based on the theory of Morse-Smale vector fields, which are structurally stable. In Section \ref{sec-Morse-Smale} we recall the basic facts concerning these vector fields. We prove that the only obstruction to the finiteness of the induced distance on Morse-Smale characteristic foliations is the existence of closed orbits. We then recall the elimination lemma, a result from contact topology which we use to break up the closed orbits of a Morse-Smale foliation by means of an arbitrarily small $C^0$ perturbation. These facts together ensure the existence of the set $\mathcal U_f$ of Theorem \ref{main}.
In Section \ref{sec-U-infty} we prove the existence of the set $\mathcal U_\infty$ and we conclude the proof of Theorem \ref{main}. 
\section{Morse-Smale vector fields}\label{sec-Morse-Smale}
The characteristic foliation of an oriented surface $S$ embedded in a co-oriented contact manifold $(M, \omega)$ is driven by a global vector field: there exists a smooth vector field $X$ on $S$ vanishing on $\Sigma (S)$, satisfying
\begin{equation}
    T_{q}S\cap \mathcal D_q=
   \text{span}_{\mathbb R} X_q,\,\,\forall\,q\in S\setminus \Sigma(S).
\end{equation}
A vector field with the properties described above is called a characteristic vector field of $S$. 
Characteristic vector fields are not unique, and can be obtained one from the other via multiplication by a positive function. 
In particular the choice of a characteristic vector field corresponds to the choice of an area form $\Omega$ on the orientable surface $S$, compatible with the orientation of the latter. Indeed, once the area form $\Omega$ is chosen, the corresponding characteristic vector field is the unique solution to the equation
\begin{equation}
    i_{X}\Omega=\omega_{|S}.
\end{equation}
The contact condition $\omega\wedge d\omega>0$ implies that the divergence $\diver_{\Omega}X(q)$ is non zero for every $q\in\Sigma(S)$:
\begin{equation}\label{div_eq}
    d\circ i_X\Omega_q=\diver_{\Omega}X(q)\Omega_q\neq 0.
\end{equation}
The sign of a characteristic point $q\in \Sigma(S)$ is defined as
\begin{equation}
\text{sign}(q):=\text{sign}(\diver_{\Omega }X(q)).
\end{equation}
For a generic surface $S$, the characteristic vector field is Morse-Smale: a type of vector field with particularly simple dynamical features, recalled in the definition below. 
 \begin{defi}\label{def-Morse}
    Let $S$ be a closed orientable surface, a vector field $X$ on $S$ is called Morse-Smale if 
    \begin{itemize}
        \item $X$ has finitely many critical points and closed orbits, all of which are non degenerate,
        \item the $\alpha$-limit of every trajectory is either a critical point or a closed orbit, and the same holds for $\omega$-limits,
        \item there are no saddle connections.
    \end{itemize}
\end{defi}
In its celebrated stability theorems, M.M. Peixoto  showed that such vector fields are generic and structurally stable on closed orientable surfaces (\cite{14},\cite{13}). These results are summarized in the following theorem.
\begin{theorem}
   Let $X$ be a Morse-Smale vector field on a closed orientable surface $S$. The dynamics of any vector field $X'$ sufficiently $C^1$-close to $X$ is topologically conjugated to the dynamics of $X$. The homeomorphism realizing the topological equivalence can be chosen $C^0$-close to the identity. Morse-Smale vector fields of class $C^r$, with $1\leq r \leq \infty$, on a closed orientable surface form an open and dense set of the set o vector fields on $S$ endowed with the $C^r$-topology.
\end{theorem}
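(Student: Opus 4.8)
The statement collects two logically independent assertions --- the structural stability of a Morse--Smale field under $C^1$-perturbations, and the openness and density of the Morse--Smale class in the $C^r$-topology --- and the plan is to treat them separately, both times exploiting the rigidity forced by Definition \ref{def-Morse}. The organising observation I would record first is that, for a Morse--Smale $X$, the non-wandering set is exactly the finite union of its hyperbolic critical points and hyperbolic closed orbits, and that the surface decomposes into finitely many \emph{canonical regions}: the connected components of the complement of the stable and unstable manifolds of the saddles and closed orbits. On each such region the flow is a trivial parallel flow, by the Poincaré--Bendixson theorem together with the tubular flow (flow-box) theorem.

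For the structural stability assertion I would first establish a \emph{local} conjugacy at each critical element. Near a hyperbolic singularity the Hartman--Grobman theorem conjugates the flow to its linearisation, and the hyperbolic type (node, focus or saddle, with the sign of the divergence) is preserved by any sufficiently $C^1$-close $X'$; near a closed orbit the same is achieved through the first-return Poincaré map, whose hyperbolicity (multiplier of modulus $\neq 1$) is an open condition and again yields a local conjugacy. These local models being robust, $X'$ carries exactly the same finite list of critical elements, of the same types and close to the originals. The heart of the argument is then to \emph{globalise}: I would match the separatrices of $X$ with those of $X'$, using the no-saddle-connection hypothesis to guarantee that every unstable separatrix of a saddle limits onto a sink or a closed orbit (and dually for the stable ones) in a manner stable under perturbation, and then extend the homeomorphism across each canonical region by transporting it along the parallel flow. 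Keeping the perturbation small keeps the resulting conjugacy $C^0$-close to the identity.

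For openness, I would note that each of the three defining conditions is open: hyperbolicity of the finitely many critical points and orbits, and the (here empty) transversal intersection of the one-dimensional stable and unstable saddle separatrices, all persist under small $C^1$, hence $C^r$, perturbations. Density I would obtain by a finite sequence of arbitrarily small perturbations of an arbitrary $X$: a local perturbation near each degenerate zero makes every singularity hyperbolic (and, by compactness, finite in number); a perturbation supported in flow boxes around closed orbits makes their Poincaré maps hyperbolic; and a perturbation supported along a saddle connection displaces one separatrix off the other, which on a surface destroys the connection, since two curves meeting transversally along a common orbit then fail to meet away from the singularities.

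The step I expect to be the genuine obstacle is the density argument, and specifically the need to rule out or destroy \emph{nontrivial recurrence}: a priori $X$ may possess exceptional, Denjoy- or Cherry-type minimal sets that are neither critical points nor closed orbits, and these must be eliminated before the finiteness of the non-wandering set can even be asserted. This is exactly the hard content of the theorem, and here one must use the orientability of $S$ together with a closing-lemma-type argument showing that any nontrivial minimal set can be broken by an arbitrarily small $C^r$ perturbation, producing a genuine closed orbit which is then rendered hyperbolic as above. Since all of this is classical, in the paper we simply invoke the theorems of Peixoto \cite{Peixoto1, Peixoto2}.
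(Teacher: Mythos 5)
The paper gives no proof of this statement at all: it is quoted as Peixoto's classical theorem, with the citations to \cite{Peixoto1} and \cite{Peixoto2} standing in for the argument, and your proposal ends by doing exactly the same thing, so the two are in agreement. Your sketch of the classical proof is also accurate as an outline --- local hyperbolic models, canonical regions, matching of separatrices, and openness by persistence of hyperbolicity --- and you correctly single out the elimination of nontrivial recurrence (Denjoy/Cherry-type minimal sets, where orientability of $S$ is essential) as the genuinely hard step of the density assertion.
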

Actually, Morse-Smale vector fields are generic among the characteristic vector fields of closed orientable surfaces in contact 3-manifolds, as it follows from the following proposition. 
\begin{prop}\label{first-piece}[Proposition 4.6.11 of \cite{9}]
    Let $S$ be a closed orientable surface embedded in a co-oriented contact manifold.Then, there exists a surface $S'$, isotopic and $C^\infty$ close to $S$, having a Morse-Smale characteristic foliation.
\end{prop}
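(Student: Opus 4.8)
The plan is to reduce the statement to a genericity result about vector fields on $S$ and then transfer that genericity to perturbations of the embedding. Fix a tubular neighborhood $N\cong S\times(-\varepsilon,\varepsilon)$ of $S$, with $S=S\times\{0\}$ and normal coordinate $t$, and parametrize the nearby surfaces as graphs $S_f=\{(p,f(p)):p\in S\}$ of small functions $f\in C^\infty(S)$, writing $\iota_f(p)=(p,f(p))$. Each $S_f$ is isotopic to $S$ through the graphs $S_{sf}$, $s\in[0,1]$, and $S_f\to S$ in $C^\infty$ as $f\to 0$. The characteristic foliation of $S_f$ is encoded by the pulled-back $1$-form $\beta_f:=\iota_f^*\omega$ on $S$, equivalently by the characteristic vector field $X_f$ defined by $i_{X_f}\Omega=\beta_f$ for a fixed area form $\Omega$ on $S$. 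Since the Morse--Smale property is invariant under multiplication of a vector field by a positive function (this preserves both the eigenvalues of the linearization at the zeros and the Poincaré return maps of the closed orbits), it depends only on the foliation, so the choice of $\Omega$ is immaterial. Thus it suffices to produce $f$ arbitrarily $C^\infty$-small for which $X_f$ is Morse--Smale.

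By Peixoto's theorem recalled above, the Morse--Smale vector fields form an open and dense subset of the space of smooth vector fields on $S$ in the $C^\infty$-topology, and the map $f\mapsto X_f$ is continuous. Hence it is enough to show that the reachable family $\{X_f\}$ meets every $C^\infty$-neighborhood of $X_0$ in a set rich enough to intersect the open dense Morse--Smale locus; openness of that locus will then also yield the structural stability used later in the paper. Observe that the contact condition already forces every zero of $X_f$ to have nonzero divergence (cf.\ \ref{div_eq}), so no singularity can be of center type; what remains is to rule out degenerate singularities, degenerate closed orbits, and saddle connections by perturbing $f$.

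The crux is a realization step: showing that perturbations of the surface generate enough perturbations of the characteristic foliation. Writing $\omega=a\,dt+\eta_t$ on $N$, with $\eta_t$ a $t$-dependent $1$-form on $S$, one computes $\beta_f=a(\cdot,f)\,df+\eta_f$, whose linearization at $f=0$ in the direction $g$ is the first-order operator $Lg=a_0\,dg+g\,\dot\eta_0$, where $a_0=a(\cdot,0)$ and $\dot\eta_0=\partial_t\eta_t|_{t=0}$. Where $a_0\neq 0$ the operator $L$ is of full rank in the $1$-jet of $g$, so the foliation direction can be tilted arbitrarily and the closed-orbit, saddle-connection and limit-set conditions can be made generic by an arbitrarily small $f$. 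The characteristic points, together with the points where $a_0$ vanishes, require a separate normal-form analysis: there the contact condition pins the local model via a Moser/Darboux-type normal form, and the nonvanishing divergence lets one remove the remaining node/focus/saddle degeneracies. Combining these local adjustments with a partition of unity and a parametric (Thom jet-) transversality argument applied to the finitely many Morse--Smale conditions produces a residual set of small $f$ for which $X_f$ is Morse--Smale.

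The main obstacle is precisely this realization/transversality step: the family $\{X_f\}$ is constrained, its reachable first-order perturbations being the image of the single first-order operator $L$ rather than all vector fields on $S$, so one must verify that this constrained family is nonetheless transverse to the positive-codimension degeneracy strata; the locus where $a_0$ vanishes is exactly where extra care is needed, and this is where one might instead invoke the genericity of convex surfaces together with Giroux's realization lemma. Once density is established, openness of the Morse--Smale locus and continuity of $f\mapsto X_f$ show that the property persists for all nearby surfaces, and taking $f$ small and using the graph isotopy $S_{sf}$ yields a surface $S'=S_f$ isotopic and $C^\infty$-close to $S$ with Morse--Smale characteristic foliation.
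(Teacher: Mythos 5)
The paper does not actually prove this proposition: it is imported as Proposition 4.6.11 of \cite{Geiges} (the genericity argument, due to Giroux, lives there), so your attempt must stand on its own as a proof. As such it has a genuine gap, one that you yourself flag as ``the main obstacle'': the transversality of the constrained family $\{X_f\}$ to the Morse--Smale degeneracy strata is not an auxiliary verification --- it \emph{is} the mathematical content of the proposition, and it is left unproven. Your setup (graph surfaces $S_f$, the form $\beta_f=\iota_f^*\omega$, invariance of the Morse--Smale property under positive rescaling, Peixoto's theorem) is correct but is essentially bookkeeping; everything hinges on showing that the image of $Lg=a_0\,dg+g\,\dot\eta_0$ is rich enough to break degenerate singularities, degenerate closed orbits, saddle connections, and nontrivial recurrence. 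Peixoto-type arguments work through perturbations localized in flow boxes, and to transplant them one must realize such perturbations by deformations of the surface. On the locus $\{a_0=0\}$ the realizable perturbation covectors at a point degenerate to the line $\mathbb{R}\,\dot\eta_0$ modulo $\mathbb{R}\,\eta_0$, and the contact condition does not even force this line to be nonzero: on $\{a_0=0\}$ it only gives $\eta_0\wedge(\dot\eta_0-da_0)\neq 0$, which is compatible with $\dot\eta_0$ being proportional to $\eta_0$. Orbits lying in, or accumulating on, this curve are exactly what a complete proof must handle, and nothing in the sketch does so.

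Two further concrete corrections. First, you group the characteristic points together with $\{a_0=0\}$ as requiring ``separate normal-form analysis''; in fact these two loci are disjoint, and the characteristic points are the harmless ones: at $p\in\Sigma(S)$ one has $\ker\omega_p=\mathcal D_p=T_pS$ while $\partial_t\notin T_pS$, hence $a_0(p)=\omega_p(\partial_t)\neq 0$, so $L$ has full pointwise rank near $\Sigma(S)$ and a Sard-type argument with the exact perturbations $a_0\,dg$ already yields nondegenerate singularities, with centers excluded by the divergence condition \ref{div_eq}. Second, the proposed fallback --- invoking genericity of convex surfaces plus Giroux's realization lemma --- cannot close the gap: it is circular, since in Giroux's theory (and in \cite{Geiges}) genericity of convexity is \emph{deduced from} the genericity of Morse--Smale characteristic foliations (a Morse--Smale foliation admits a dividing set, hence the surface is convex), not the other way around; moreover, the realization lemma only produces isotopies that are $C^0$-small and only realizes foliations divided by the given dividing set, so it could not deliver the $C^\infty$-closeness demanded by the statement in any case.
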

A necessary condition for a surface to have finite induced distance is the absence of closed orbits. For a surface with a Morse-Smale characteristic foliation this condition is also sufficient.
 \begin{lemma}\label{second-piece}
    Let $S$ be a closed orientable surface embedded in a co-oriented 3D-contact sub-Riemannian manifold $M$, having characteristic foliation driven by a Morse-Smale vector field $X$. Then the induced distance $d_{S}$ is finite if and only if $X$ does not have closed orbits.
\end{lemma}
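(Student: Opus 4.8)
The plan is to prove both implications of Lemma \ref{second-piece}. The forward direction is the easy one: if $X$ has a closed orbit $\Gamma$, then $\Gamma$ is an admissible closed curve on $S$, and any point on $\Gamma$ cannot be joined to a point off $\Gamma$ without crossing $\Gamma$ transversally. Since $\Gamma$ is itself a leaf of the characteristic foliation, the only admissible directions along $\Gamma$ are tangent to $\Gamma$, so no admissible curve can leave $\Gamma$. Hence points on $\Gamma$ are disconnected (in the admissibility sense) from the rest of $S$, and $d_S$ is infinite. This shows the absence of closed orbits is necessary.

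For the converse, assume $X$ is Morse-Smale with \emph{no} closed orbits, and show $d_S$ is finite. The idea is to exploit the simple global structure forced by the Morse-Smale hypothesis. With no closed orbits, the $\alpha$- and $\omega$-limit of every trajectory must be a critical point, so the dynamics is a gradient-like flow: every leaf runs from one critical point to another. The critical points come in the three standard types (sources, sinks, saddles), and by equation \ref{div_eq} each has nonzero divergence, hence is a nondegenerate (hyperbolic) singularity of the planar field. I would first argue that each critical point is \emph{locally} accessible: near a focus/node/saddle the characteristic foliation behaves like a standard phase portrait, and one can show that within a small neighborhood the induced distance to the singular point is finite. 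The delicate case is the behavior near characteristic points, where the metric length of leaves approaching the singularity must be controlled; here the sign condition $\diver_\Omega X(q)\neq 0$ guarantees that orbits reach (or leave) the singular point in the ``right'' way, with leaves spiraling or flowing in at a controlled rate so that one can travel along a leaf and switch leaves near $q$ at finite cost.

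The global step is then to chain together these local finiteness statements. Given two arbitrary points $q_0,q_1\in S$, I would connect each of them along its own leaf to a limit critical point, reducing the problem to joining critical points to one another with finite-length admissible curves. Because the foliation has no closed orbits and no saddle connections, the saddle separatrices partition $S$ into finitely many cells, each of which is an open region flowing from a single source to a single sink; travelling within a cell along and across leaves near the attracting/repelling singularity gives finite cost. To pass between adjacent cells one crosses a saddle separatrix, and the key observation is that near a saddle point one can hop from one separatrix to a neighboring one at arbitrarily small cost by passing close to the saddle itself. Iterating, any two critical points are connected by a finite concatenation of finite-length admissible arcs, which yields finiteness of $d_S$ between $q_0$ and $q_1$.

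The main obstacle I expect is the local length estimate near the characteristic points, i.e.\ verifying that leaves accumulate on the singularity fast enough that ``switching leaves'' near $q$ costs only a finite (indeed arbitrarily small) length. This requires a careful normal-form analysis of the characteristic vector field near a nondegenerate characteristic point, using the nonvanishing divergence \ref{div_eq} to rule out the pathological spiralling that would make the distance blow up. Concretely I would put the field in a local model (linearizing up to the hyperbolicity of the singularity), compute the metric length of a short transversal step near $q$, and show it tends to zero as one approaches $q$; the sign of the divergence controls whether $q$ behaves as a source or a sink for this purpose, but in either case the estimate goes through. Once this local lemma is in place, the global chaining argument is essentially topological and routine.
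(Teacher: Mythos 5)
Your forward implication is fine, and you have correctly identified the key analytic ingredient for the converse: that a leaf reaches a non-degenerate singular point with finite length (the paper does not reprove this; it cites Proposition 1.3 of \cite{Daniele}, where it follows from the convergence rate of trajectories in the stable manifold of a non-degenerate critical point). However, the converse as you describe it has a genuine gap at its crucial step, namely the mechanism for changing leaves. You claim that ``near a saddle point one can hop from one separatrix to a neighboring one at arbitrarily small cost by passing close to the saddle itself,'' and similarly that one can ``switch leaves near $q$'' at an elliptic point. As stated this is impossible: away from $\Sigma(S)$ the intersection $TS\cap\mathcal D$ is a smooth line field, so any admissible Lipschitz curve is trapped in a single leaf (in a flow-box chart the transverse coordinate is absolutely continuous with a.e.\ vanishing derivative, hence constant). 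This is exactly the fact \emph{you yourself invoke} in the forward direction to show that no admissible curve can leave a closed orbit; it applies equally near a saddle, so no admissible curve joins two distinct separatrices while staying in a punctured neighborhood of the saddle, however small the ``hop.'' The only way to change leaves is to pass \emph{through} a characteristic point $q$, where $T_qS=\mathcal D_q$ makes every direction admissible: one enters along one leaf and exits along another, at a cost equal to the (finite) lengths of the two leaf segments. Your proposal never states this mechanism, and without it the chaining between cells does not go through.

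Once the crossing mechanism is corrected (all switches happen at singular points, not near them), your cell-decomposition argument can be completed, but it is worth comparing it with the paper's global step, which is shorter and avoids the combinatorics of cells entirely: define $x\sim y$ iff an admissible curve joins them, and work on $S'$, the complement of the saddles. Since there are no closed orbits and no saddle connections, every $x\in S'$ has at least one elliptic $\alpha$- or $\omega$-limit, so the entire basin of that elliptic point --- an \emph{open} set, all of whose points are mutually joined through the elliptic point --- lies in $[x]$. Hence equivalence classes are open in the connected set $S'$, so there is only one; saddles are then attached through their separatrices, which terminate at elliptic points. Two smaller corrections: the nonvanishing divergence \ref{div_eq} does \emph{not} imply that a singular point is non-degenerate (a linearization can have nonzero trace and zero determinant); non-degeneracy is part of the Morse--Smale hypothesis, not a consequence of the contact condition. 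Also, crossings between cells adjacent along a separatrix can always be made through the elliptic endpoint of that separatrix, so with the paper's formulation the saddle points need never be used as junctions at all.
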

\begin{proof}
     As we have already said, if $X$ has closed orbits, then the induced distance is not finite. Assume now that $X$ does not have closed orbits.
     The Morse-Smale property ensures that any leaf of the characteristic foliation is admissible, and hence has finite length: the finiteness of these lengths is proved in \cite{5} Proposition 1.3, and it is a consequence of the fact that the integral curves of $X$ contained in the stable manifold of a non-degenerate critical point $q$ converge sub-exponentially to $q$.
    Denoting with $\mathcal A_{x,y}$ the set of admissible curves joining $x$ to $y$, we only need to show that $\mathcal A_{x,y}\neq\emptyset$ for any $x,y\in S$.
    Let $ S'$ be the complement of the hyperbolic points (saddles) in $S$, which is open and connected. By definition of Morse-Smale vector field, every saddle is connected to some elliptic point, thus it is sufficient to show that $\mathcal A_{x,y}\neq\emptyset$ for any $x,y\in S'$.
    For every $x,y\in S$ we define the equivalence relation 
    \begin{equation}
        x\sim y \iff \mathcal A_{x,y}\neq\emptyset,
    \end{equation}
    and we denote with $[x]$ the equivalence class of $x$.
    By definition of Morse-Smale vector field, for every $x\in S'$ the limits
    \begin{equation}\label{critical_lim}
        \omega(x)=\lim_{t\to\infty}e^{tX}(x),\,\,\,\alpha(x)=\lim_{t\to\infty}e^{-tX}(x),
    \end{equation}
    exist and are critical points of $X$. Moreover at least one of the two limits must be an elliptic point, because there are no trajectories joining hyperbolic points. Therefore for every $x\in S'$ the set $[x]\cap S'$ contains an open neighbourhood of $x$, which is the stable (unstable) manifold of some elliptic point. It follows that the equivalence classes are open, thus, $S'$ being connected, there is only one equivalence class.
\end{proof}
Given any orientable surface $S$ embedded in a 3D-contact manifold, with an arbitrarily small $C^\infty$ perturbation, we can achieve an isotopic surface with characteristic foliation of Morse-Smale type. The new surface may well present closed orbits, even if the original surface $S$ did not have any. The elimination lemma, a result due to Giroux which we are about to state, describes a procedure which allows us to destroy closed orbits preserving the Morse-Smale property, by means of an arbitrarily $C^0$-small perturbation of the surface, see Figure \ref{b_el} and \ref{a_el}.
\begin{lemma}\label{elimination_lem}[Lemma 4.6.26 of \cite{9}]
    Let $(M,\mathcal D)$ be a three dimensional co-orientable contact manifold and $S$ and embedded closed oriented surface with characteristic foliation of Morse-Smale type. Assume that there exist two singular point of the same sign, one elliptic point $q_e$ and one hyperbolic point $q_h$, connected by a a separatrix $\gamma$ of the latter. Let $U$ be an arbitrarily small neighbourhood of $\gamma$. Then there exists an isotopy $\varphi_t:S\to M$, $t\in[0,1]$, with the following properties:
    \begin{itemize}
        \item $\varphi_0:S\to M$ is the inclusion of $S$ in $M$,
        \item $\varphi_t$ can be chosen arbitrarily $C^0$-close to $\varphi_0$,
        \item The isotopy is fixed on $\gamma$ and outside $U$,
        \item $\varphi_1(S)$ has a characteristic foliation of Morse-Smale type,
        \item the characteristic foliation of $\varphi_1(S)$ has no singularities in $\varphi_1(U)$.
    \end{itemize}
\end{lemma}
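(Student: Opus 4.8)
The plan is to reduce the statement to an explicit local model near the separatrix $\gamma$ and to realize the cancellation of the pair $\{q_e,q_h\}$ by a graphical perturbation of the surface confined to a thin slab around $\gamma$. Since the isotopy is required to be fixed outside $U$ and on $\gamma$, only a neighbourhood of $\gamma$ matters. By Darboux's theorem I would choose coordinates $(x,y,z)$ on a neighbourhood $V\subset M$ of $\gamma$ in which $\mathcal D=\ker(dz-y\,dx)$, the curve $\gamma$ is a segment of the $x$-axis $\{y=z=0\}$, and $S$ is a graph $\{z=f(x,y)\}$. Because $\gamma\subset S$ lies at height $0$ we have $f(x,0)=0$, hence $f_x(x,0)=0$, which is exactly the condition that $\gamma$ be a leaf of the characteristic foliation. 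Taking $U$ small, the only characteristic points in $U$ are $q_e=(x_e,0,0)$ and $q_h=(x_h,0,0)$, the points of the axis where in addition $f_y(x,0)$ vanishes; they are nondegenerate, of the same sign, one elliptic and one hyperbolic. Using the normal forms for nondegenerate singular points together with the same-sign hypothesis, I would first put the jet of $f$ along $\gamma$ into standard form, so that the configuration becomes the model \emph{cancellable} pair — the contact-geometric analogue of two Morse critical points of index $0$ and $1$ joined by a single gradient trajectory.

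Next I would translate the problem into the plane. For a graph $z=f(x,y)$ one has $\omega|_S=\beta_f$, where $\beta_f:=(f_x-y)\,dx+f_y\,dy$; the characteristic foliation is $\ker\beta_f$ and the characteristic points are exactly the zeros of $\beta_f$. A normal perturbation of $S$ amounts to replacing $f$ by $f+\rho$, and the five conclusions of the lemma become planar requirements on $\rho$: it is supported in the chart and inside $U$; $\rho(x,0)=0$, so that $\gamma$ stays pointwise fixed and (as then $\rho_x(x,0)=0$) remains a leaf; $\beta_{f+\rho}$ has no zeros in $U$; the perturbed foliation is again Morse–Smale; and $\|\rho\|_{C^0}$ is small. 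Observe that $\gamma$ then survives as a \emph{regular} leaf crossing the slab, which is precisely what reconciles ``$\gamma$ fixed'' with ``no singularities in $\varphi_1(U)$''.

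Then I would construct $\rho$. Writing $\rho(x,y)=y\,\sigma(x,y)$ forces $\rho(x,0)=0$ automatically and gives $\rho_y(x,0)=\sigma(x,0)$. Along the axis the new $dy$-component of $\beta$ is $f_y(x,0)+\sigma(x,0)$, and cancellability of the model guarantees that a bounded profile $\sigma(\cdot,0)$ can be chosen making this expression nonvanishing for every relevant $x$, thereby destroying both singular points while tapering to $0$ near $\partial U\cap\gamma$, where $f_y(x,0)$ is already of a definite sign. Cutting $\sigma$ off to a slab $\{|y|<\varepsilon\}$ keeps $\|\rho\|_{C^0}=O(\varepsilon)$ small even though the slope $\rho_y$ is of order one — the characteristic feature of such an elimination, a tiny-amplitude but steep push. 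The isotopy $\varphi_t$ is then obtained by flowing $S$ in the $\partial_z$-direction with profile $t\rho$. It remains to check that no new zeros of $\beta_{f+\rho}$ appear off the axis inside the slab and that nondegeneracy, the absence of saddle connections and of new closed orbits persist; outside $U$ nothing changes.

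\textbf{Main obstacle.} The crux is the simultaneous control of two competing demands: removing \emph{every} zero of the $1$-form $\beta_f$ in $U$ is a condition on the first derivatives of the perturbation, whereas $C^0$-smallness caps its amplitude. Reconciling them forces $\rho$ into a very thin slab carrying order-one slope, and the genuinely delicate point is to verify, through the explicit model computation, that within this slab no spurious characteristic points are created off $\gamma$ and that the Morse–Smale structure is preserved. This is exactly where the hypothesis that $q_e$ and $q_h$ have the \emph{same sign} is indispensable: it is the matching condition that makes the elliptic–hyperbolic pair Morse-cancellable, whereas an opposite-sign pair, having nonzero local index obstruction, cannot be eliminated.
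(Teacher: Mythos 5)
A preliminary remark on the comparison: the paper itself contains no proof of this statement --- it is imported verbatim as Lemma 4.6.26 of Geiges's book --- so your sketch can only be measured against the standard Giroux/Geiges argument. Its overall strategy (localize along $\gamma$, present nearby perturbations of $S$ as graphs, and kill the zeros of the induced $1$-form by a perturbation that is $C^0$-small but has order-one slope) is indeed the right one, and your computation $\omega|_S=(f_x-y)\,dx+f_y\,dy$ and the identification of the amplitude-versus-slope tension are correct.

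The genuine gap is the very first step: you assume that Darboux coordinates can be chosen so that $S$ is a graph $\{z=f(x,y)\}$ containing the $x$-axis. The Legendrian-arc neighbourhood theorem gives $\mathcal D=\ker(dz-y\,dx)$ with $\gamma$ on the axis, but it gives no control on how $S$ sits: at an interior point of $\gamma$ one only knows $T_qS=\mathrm{span}\{\partial_x,\,a\partial_y+b\partial_z\}$ with $(a,b)\neq(0,0)$ and $b$ unconstrained, so nothing prevents $a=0$, i.e.\ $\partial_z\in T_qS$, destroying graphicality. Worse, this is not a removable technicality, because graphicality secretly encodes the same-sign hypothesis: in any graphical chart the characteristic vector field compatible with $dx\wedge dy$ is $X=f_y\partial_x+(y-f_x)\partial_y$, whose divergence is identically $1$, so \emph{all} singular points lying in one graphical chart automatically have the same sign. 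Consequently, proving that the same-sign hypothesis yields such a model is precisely the nontrivial contact-geometric input; in Geiges it is supplied by the theorem that the germ of the contact structure along $S$ is determined by the characteristic foliation, applied to an abstract model $\bigl(S\times\mathbb R,\ \ker(du+\beta)\bigr)$ built from a defining form $\beta$ with $d\beta>0$ near $\gamma$, a form which exists only when $q_e$ and $q_h$ have equal signs. Your sketch instead locates the role of the hypothesis inside the one-dimensional cancellation along the axis, but there it is vacuous: in the graph model $f_y(\cdot,0)$ automatically has the same sign on the two outer sides of a source--saddle pair (the elliptic point repels and the saddle attracts along the axis, or vice versa), so the profile $\sigma$ always exists once the model is granted. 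Relatedly, your closing claim that an opposite-sign pair carries a ``nonzero local index obstruction'' is wrong: every elliptic--hyperbolic pair has index sum $1+(-1)=0$; what obstructs elimination of an opposite-sign pair is the nonexistence of the local model (no defining form with nonvanishing exterior derivative along $\gamma$), not an index count. Finally, the verification you defer --- that no zeros of $\beta_{f+\rho}$ appear off the axis in the slab, and that the perturbed foliation is still Morse--Smale (trajectories now crossing $U$ could a priori create saddle connections or closed orbits) --- is a genuine part of the proof and remains unproven in your proposal.
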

\begin{figure}[H]
\begin{center}
    \includegraphics[width=11cm]{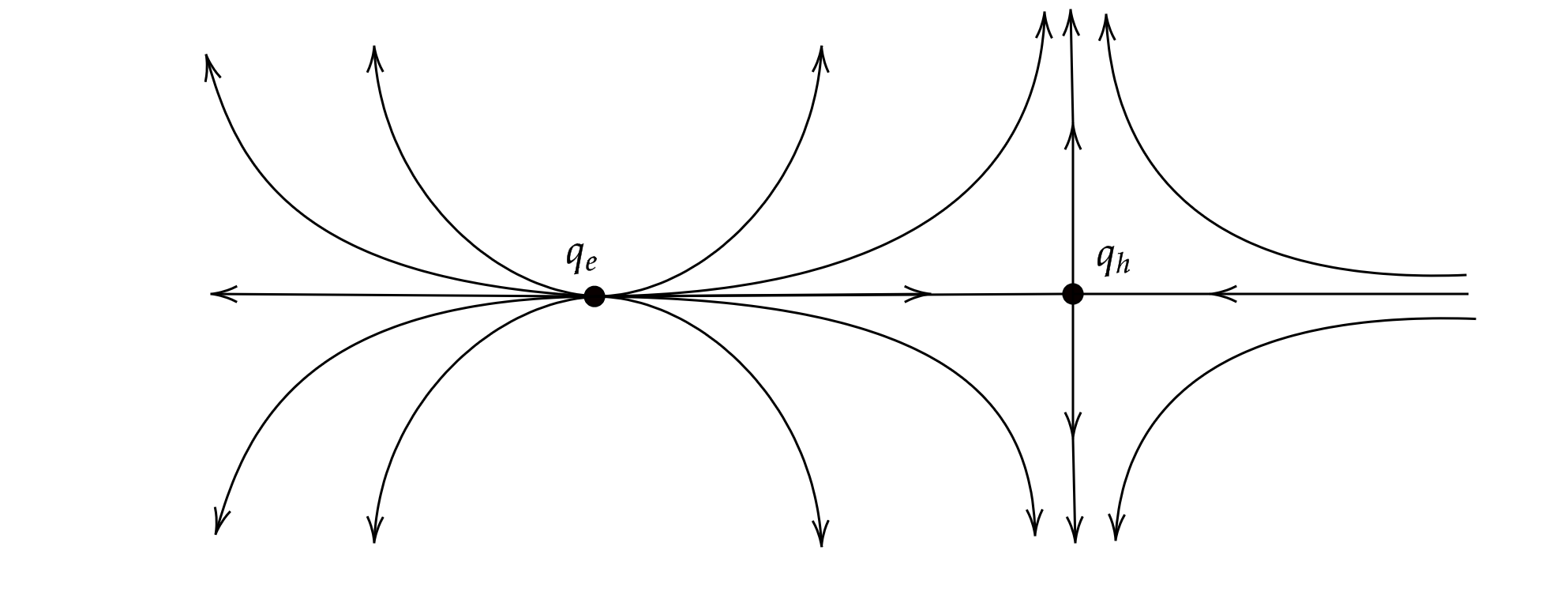}
\end{center}
\caption{An elliptic point $q_e$ and a saddle point $q_h$ in elimination position, as described by Lemma \ref{elimination_lem}.}
\label{b_el}
\end{figure}
The elimination lemma allows us to reduce the number of singularities in the characteristic foliation, without destroying the Morse-Smale property. Of course, this procedure can be reversed: we can use the elimination procedure to increase the number of singular points. In particular by means of an arbitrarily small $C^0$ perturbation of the surface $S$, it is possible to break any closed orbit introducing a pair of singular points along it, both positive if the orbit is repelling and negative if the orbit is attracting. We summarize this immediate consequence of the lemma in the following corollary.
\begin{figure}[H]
\begin{center}
    \includegraphics[width=11cm]{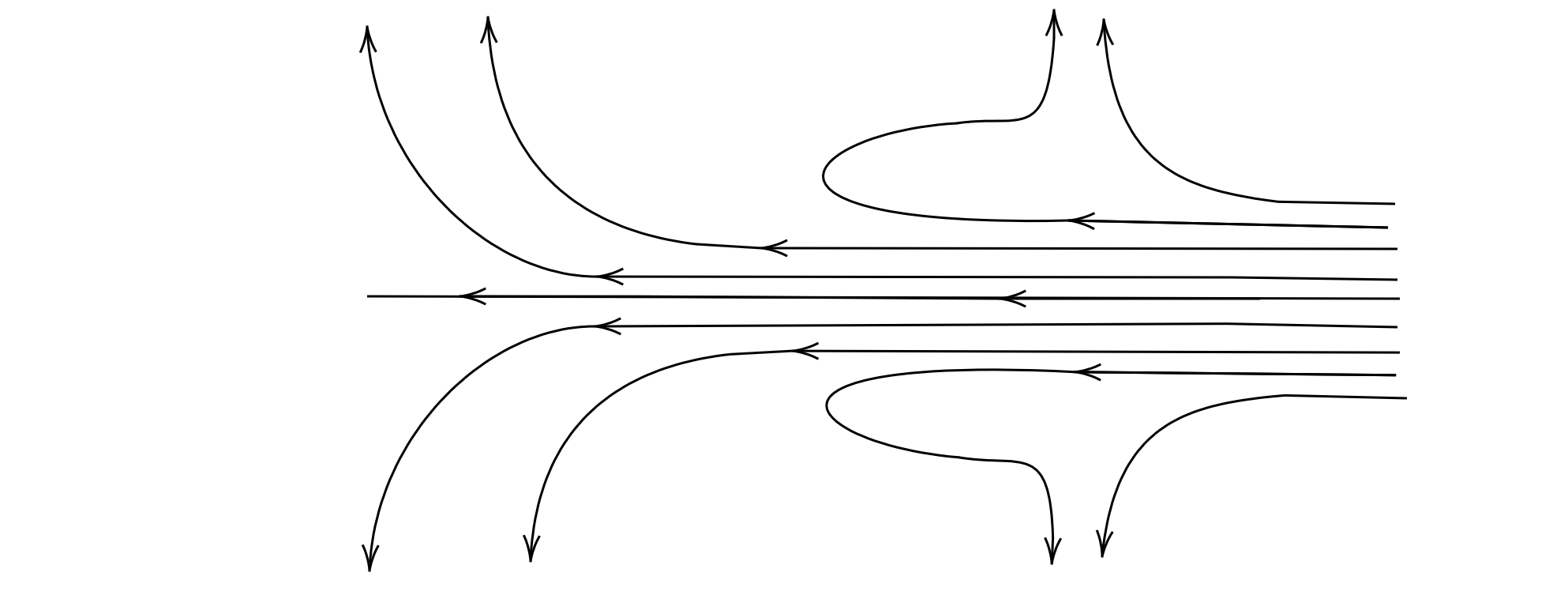}
\end{center}
\caption{Elimination of the critical point $q_e$, $q_h$ depicted in Figure \ref{b_el}}.
\label{a_el}
\end{figure}

\begin{corollary}\label{third-piece}
Let $S$ be a closed orientable surface embedded in a co-oriented contact three manifold $(M,\mathcal D)$, having characteristic foliation of Morse-Smale type. Then, there exists a surface $S'$ arbitrarily $C^0$-close to $S$, having a characteristic foliation of Morse-Smale type without closed orbits.
\end{corollary}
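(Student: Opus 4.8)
The plan is to break the closed orbits one at a time by running the elimination lemma (Lemma \ref{elimination_lem}) backwards, as anticipated in the discussion preceding the statement. First I would record the finiteness: since $X$ is Morse-Smale, Definition \ref{def-Morse} provides finitely many closed orbits $\gamma_1,\dots,\gamma_k$, all non-degenerate. On a surface a non-degenerate closed orbit is hyperbolic, so each $\gamma_i$ is either an attracting or a repelling limit cycle, and in a punctured annular neighbourhood the trajectories spiral monotonically towards $\gamma_i$ (if attracting) or away from it (if repelling). As the $\gamma_i$ are pairwise disjoint simple closed curves carrying no singular point, I would fix pairwise disjoint annular neighbourhoods $A_i\supset\gamma_i$, each containing neither singular points nor other closed orbits, and localise every modification inside the $A_i$. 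It then suffices to treat a single orbit $\gamma=\gamma_i$.

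Next I would perform the surgery. Choosing a small flow box $B\subset A_i$ that meets $\gamma$ in one transverse arc and on which the foliation is nonsingular, the reverse of Lemma \ref{elimination_lem} yields, through a $C^0$-small isotopy supported in $B$, a surface whose characteristic foliation is unchanged on $\partial B$ but now carries exactly one elliptic point $e$ and one hyperbolic point $h$ inside $B$, in elimination position (joined by a separatrix of $h$). The sign of the pair is the one singled out above: $e$ and $h$ are both positive when $\gamma$ is repelling and both negative when $\gamma$ is attracting, in accordance with the sign convention of equation \ref{div_eq}, which ties the sign of a singular point to the local area-expansion of the flow. In either case $\gamma$ ceases to be a closed orbit, since it now passes through $e$ and $h$.

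The hard part will be checking that this surgery creates no new closed orbit in $A_i$ and preserves the Morse-Smale property. For the first point I would argue that the sign condition makes the area-expansion (respectively contraction) of the original cycle persist on both sides of the new singular set, so that every trajectory entering a one-sided neighbourhood of the former $\gamma$ still leaves it monotonically and limits, exactly as before the surgery, on the critical points or closed orbits lying beyond $\partial A_i$; hence no recurrence is trapped. For the Morse-Smale property I would note that $e$ and $h$ are non-degenerate by construction, that the only new connection is the separatrix $e\to h$ joining a hyperbolic point to an elliptic one, and therefore not a saddle connection, and that after an arbitrarily small further perturbation inside $B$ the remaining separatrices of $h$ can be made to avoid the other saddles. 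Finally, carrying out the surgery inside each of the disjoint $A_1,\dots,A_k$ destroys every closed orbit while keeping the foliation Morse-Smale and introducing no new closed orbit; the composite isotopy is $C^0$-small, and its image is the desired surface $S'$.
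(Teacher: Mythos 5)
Your proposal is correct and follows essentially the same route as the paper: the paper treats the corollary as an immediate consequence of running Lemma \ref{elimination_lem} in reverse, inserting along each closed orbit an elliptic--hyperbolic pair of the same sign (positive for repelling orbits, negative for attracting ones) via a $C^0$-small isotopy. Your write-up simply makes explicit the details the paper leaves implicit (finitely many hyperbolic limit cycles, disjoint annular neighbourhoods, no new closed orbits or saddle connections), and these checks are sound.
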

Putting together Proposition \ref{first-piece}, Lemma \ref{second-piece} and Corollary \ref{third-piece}, we deduce the following fact.
\begin{prop}\label{finite-lemma}
   Let $(M,\mathcal D, \bf{g})$ be a contact sub-Riemannian manifold, let $g$ be a positive integer and let $\bf{Emb}$$(S,M)$ be the space of embeddings of the genus $g$ orientable surface in $M$, endowed with the $C^0$ topology. The set of surfaces having finite induced distance is dense in $\bf{Emb}$$(S,M)$.
\end{prop}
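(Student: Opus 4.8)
The plan is to chain together the three ingredients developed in this section, matching each to the precise topology in which it operates. The target statement concerns density in the $C^0$ topology, which is fortunate because the elimination lemma (Lemma \ref{elimination_lem}) and its Corollary \ref{third-piece} only guarantee $C^0$-small perturbations; the Morse-Smale genericity (Proposition \ref{first-piece}) instead gives $C^\infty$-small, hence a fortiori $C^0$-small, perturbations. So the two perturbation steps are compatible in the $C^0$ topology, and no loss occurs in combining them.

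Concretely, I would argue as follows. Fix an arbitrary embedding $S \hookrightarrow M$ and an arbitrary $C^0$-neighbourhood of it; I must produce an embedding in that neighbourhood with finite induced distance. First, apply Proposition \ref{first-piece} to obtain a surface $S'$, isotopic and $C^\infty$-close (hence $C^0$-close) to $S$, whose characteristic foliation is driven by a Morse-Smale vector field. This $S'$ may have closed orbits, so the induced distance on it need not be finite. Second, apply Corollary \ref{third-piece} to $S'$: since $S'$ already has a Morse-Smale characteristic foliation, there is a surface $S''$ arbitrarily $C^0$-close to $S'$ that still has a Morse-Smale characteristic foliation but now has \emph{no} closed orbits. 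Third, invoke Lemma \ref{second-piece}: a surface with Morse-Smale characteristic foliation and no closed orbits has finite induced distance $d_{S''}$. By choosing the two perturbations small enough, $S''$ lies in the prescribed $C^0$-neighbourhood of the original $S$, which proves density.

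The only genuine care required is the bookkeeping of the successive perturbations: I would first fix the total $C^0$-error budget $\varepsilon$, spend at most $\varepsilon/2$ on the Morse-Smale perturbation producing $S'$, and then spend at most $\varepsilon/2$ on the elimination perturbation producing $S''$, so that $S''$ is within $\varepsilon$ of $S$ by the triangle inequality for the $C^0$ distance. One should also note that both perturbations yield genuine embeddings (isotopies of $S$), so $S''$ is indeed an element of $\textbf{Emb}(S,M)$ and the argument stays inside the relevant space.

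I do not expect a serious obstacle here, since all the work has been front-loaded into the three cited results; the proposition is essentially their assembly. If anything, the subtle point is purely topological rather than metric: one must be sure that applying Corollary \ref{third-piece} does not reintroduce the hypotheses needed for the next step to fail — but since the corollary explicitly preserves the Morse-Smale property while removing closed orbits, the hypotheses of Lemma \ref{second-piece} are met exactly, and the conclusion of finiteness follows immediately.
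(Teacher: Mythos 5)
Your proposal is correct and is exactly the argument the paper intends: the paper states the proposition as an immediate consequence of ``putting together'' Proposition \ref{first-piece}, Lemma \ref{second-piece} and Corollary \ref{third-piece}, without writing out the chaining explicitly. Your version, with the $\varepsilon/2$ bookkeeping and the observation that the $C^\infty$ and $C^0$ perturbations compose correctly in the $C^0$ topology, is just a careful spelling-out of the same assembly.
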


\section{Proof of Theorem \ref{main}}\label{sec-U-infty}
  It is left to show that the subset of $\textbf{Emb}(S,M)$ of embedded surfaces with infinite induced distance is non empty, for any contact $3$-manifold $M$.
The result is a consequence of the following lemma, paired with Darboux theorem.
\begin{lemma}\label{infite-lemma}
    For every positive integer $g$ there exist a closed surface of genus $g$ embedded in the three dimensional Heisenberg group, having infinite induced distance.
\end{lemma}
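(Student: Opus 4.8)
The plan is to reduce the statement to the construction, for each $g$, of a genus-$g$ embedded surface whose characteristic foliation possesses at least one \emph{closed orbit}, and then to invoke the fact --- already used in the proof of Lemma \ref{second-piece} --- that a closed orbit forces $d_S=+\infty$. Let me first recall why this implication holds, since it clarifies which features of the closed orbit are essential. An admissible curve $\gamma$ satisfies $\dot\gamma=\lambda X$ almost everywhere, where $X$ is a characteristic vector field; on the open set $S\setminus\Sigma(S)$ the field $X$ does not vanish, so inside a flow box $\gamma$ is a sign-changing reparametrisation of the flow and cannot leave the leaf it sits on. Leaves can therefore be exchanged only at characteristic points. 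A closed orbit $\Gamma$ is a regular closed leaf, hence admits a characteristic-point-free annular collar $A$; running the flow-box argument inside $A$ shows that a point $q\in\Gamma$ can be reached by an admissible curve only from points already lying on $\Gamma$. Thus no point of $S\setminus\Gamma$ is joined to $\Gamma$, and $d_S=+\infty$. This argument is purely local around $\Gamma$ and is insensitive both to whether $\Gamma$ separates $S$ and to the behaviour of the foliation elsewhere; this is exactly what will allow me to attach handles freely.

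For the base case $g=1$ I would produce a torus carrying a closed orbit explicitly. Writing the normalised contact form of the Heisenberg group as $\omega=dz+\tfrac12(x\,dy-y\,dx)$ and parametrising a torus of revolution about the $z$-axis by $(s,\theta)\mapsto(\rho(s)\cos\theta,\rho(s)\sin\theta,z(s))$, with $(\rho,z)\colon S^1\to\{\rho>0\}$ an embedded profile, one computes $\omega|_S=z'(s)\,ds+\tfrac12\rho(s)^2\,d\theta$, so that a characteristic vector field is
\begin{equation*}
X=\tfrac12\rho(s)^2\,\partial_s-z'(s)\,\partial_\theta .
\end{equation*}
Since $\rho>0$, this $X$ never vanishes, so the torus has no characteristic points, in agreement with $\chi(T^2)=0$. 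A nonsingular field need not yet have a closed orbit, so I would apply Proposition \ref{first-piece} to replace $T$ by a $C^\infty$-close isotopic torus $T'$ with Morse-Smale characteristic foliation; as the condition $\mathcal D_q\neq T_qS$ is open and holds with a uniform margin on the compact $T$, a sufficiently small perturbation keeps $T'$ free of characteristic points. A nonsingular Morse-Smale field on a closed surface must have a closed orbit --- otherwise it would possess no critical elements at all, contradicting the nonemptiness of $\omega$-limit sets on a compact surface --- so $T'$ carries a hyperbolic closed orbit $\Gamma$ and $d_{T'}=+\infty$. Alternatively, one may keep the rotational symmetry and tune the profile so that the rotation number of the flow $\dot s=\tfrac12\rho^2,\ \dot\theta=-z'$ is rational, in which case every orbit is closed.

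For $g\geq 2$ I would fix such a $T'$, choose a closed orbit $\Gamma$ together with a characteristic-point-free annular collar $A\supset\Gamma$, and attach $g-1$ handles to $T'$ inside $\mathbb R^3\setminus N$, where $N$ is a neighbourhood of $A$. Because $\mathbb R^3$ offers unlimited room and the attachment is supported away from $A$, the resulting surface $S_g$ is an embedded, orientable, genus-$g$ surface whose characteristic foliation agrees with that of $T'$ on $A$; in particular $\Gamma$ survives as a regular closed orbit with its collar intact, and the local argument of the first paragraph yields $d_{S_g}=+\infty$.

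I expect the main obstacle to be precisely this last step: making the handle attachment rigorous while guaranteeing simultaneously that (i) the new surface stays embedded in the Heisenberg group, (ii) the modification is genuinely supported off the collar $A$, so that $\Gamma$ remains a closed orbit, and (iii) no newly created characteristic points on the handles open a ``bypass'' around $\Gamma$. Point (iii) is the conceptually delicate one, but it is settled by the locality observation above: since reaching a point of the closed leaf $\Gamma$ requires already being on $\Gamma$, the extra characteristic points and extra topology introduced elsewhere are irrelevant to the obstruction. Everything else is soft --- embeddedness is arranged by taking thin standard handles in a small ball disjoint from $N$, and the genus count is additive under handle attachment.
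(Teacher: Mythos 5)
Your proposal is correct and follows essentially the same route as the paper: the same torus-of-revolution computation showing the characteristic field is nowhere vanishing, the same perturbation via Proposition \ref{first-piece} to a nonsingular Morse-Smale foliation (which forces a closed orbit), and the same genus-raising surgery supported away from the closed leaf --- the paper phrases your handle attachment as a connected sum $T\#\Sigma_{g-1}$ with a ball chosen disjoint from the closed orbit. The only difference is presentational: you spell out the flow-box argument for why a closed orbit forces $d_S=+\infty$, which the paper treats as already established.
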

\begin{proof}
The contact structure of the 3-dimensional Heisenberg group can be written as $(\mathbb R^3, \mathcal D=\text{span}\{\partial_x+\frac{y}{2}\partial_z, \partial_y-\frac{x}{2}\partial_z\})$. Let us denote the characteristic foliation of a surface $S$ embedded in $(\mathbb R^3, \mathcal D)$ by $\mathcal D S$. Consider first the case $g=1$. There exists an embedded torus in $(\mathbb R^3, \mathcal D)$ such that at least one leaf of $\mathcal DT$ is an embedded closed curve. Indeed given $0<r<R$, consider the torus $T_{R,r}$ given by the embedding 
\begin{equation}
\mathbb T^2\ni(\theta_1,\theta_2)\mapsto((R+r\cos\theta_1)\cos\theta_2, (R+r\cos\theta_1)\sin\theta_2, r\sin\theta_1).
\end{equation}
The characteristic foliation on $T_{R,r}$ is driven by the vector field
\begin{equation}\label{eq-X}
    X=\frac{(R+r\cos\theta_1)^2}{2}\partial_{\theta_1}-r\cos\theta_1\partial_{\theta_2}.
\end{equation}
Indeed we have 
\begin{equation*}
    \mathcal D=\ker\omega:=\ker\left\{dz+\frac{1}{2}(xdy-ydx)\right\}=\ker\left\{dz+\frac{\rho^2}{2}d\theta\right\},
\end{equation*}
where $\rho^2=x^2+y^2$ and $\theta=\arctan(y/x)$. Computing the kernel of the restriction 
\begin{equation*}
\omega_{|T_{r,R}}=r\cos\theta_1d\theta_1+\frac{(R+r\cos\theta_1)^2}{2}d\theta_2,
\end{equation*}
yields the filed \eqref{eq-X}.
Thanks to Proposition \ref{first-piece} we know that with a $C^\infty$ small perturbation of $T_{r,R}$ we can obtain a torus $T$ having a Morse-Smale characteristic foliation which can be assumed to be without singular points, since the field $X$ in \eqref{eq-X} is nowhere vanishing. A Morse-Smale characteristic foliation without singular points must contain closed orbits. This follows from the second property of Morse-Smale vector fields listed in Definition \ref{def-Morse}. The lemma is proved for $g=1$.
Let now $\Sigma_{g-1}$ be any embedded surface of genus $g-1$, not intersecting the torus $T$ mentioned above. Let $\ell\in\mathcal D T$ be a closed leaf and let $B\subset T$ be a ball not intersecting $\ell$, $\ell\cap B=\emptyset$. We define $\Sigma_{g}:=T\#\Sigma_{g-1}$, where the connected sum is made deleting $B$ from $T$, deleting a small ball from $\Sigma_{g-1}$, and gluing together (smoothly) the resulting boundary circles, see Figure \ref{connected_sum}. Since $B\cap\ell=\emptyset$, $\mathcal D\Sigma_{g}$ still has the closed curve $\ell$ among its leaves, thus $\Sigma_g$ has infinite induced distance.
\end{proof}
\begin{figure}[H]
\begin{center}
    \includegraphics[width=12cm]{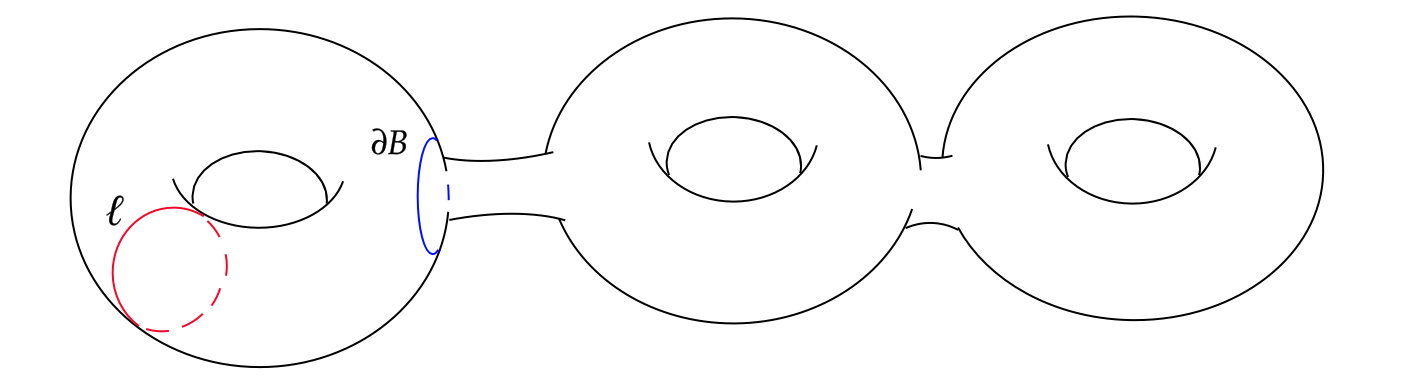}
\end{center}
\caption{Connected sum between the torus $T$ containing the closed leaf $\ell$ depicted in red and the boundary $\partial B$ depicted in blue, on the left, and the surface $\Sigma_{g-1}$, on the right.}
\label{connected_sum}
\end{figure}
We conclude the proof of Theorem \ref{main}. 
\begin{proof}
We define $\mathcal U_\infty, \mathcal U_f\subset\textbf{Emb}(S,M)$ to be set of embeddings of $S$ having a characteristic foliation of Morse-Smale type with and without closed orbits respectively. They are both open because of structural stability of Morse-Smale vector fields. Moreover, it follows from Proposition \ref{first-piece} that the union $\mathcal U_f\cup\mathcal U_\infty$ is dense in $\textbf{Emb}(S,M)$. Finally Lemma \ref{infite-lemma} ensures that $\mathcal U_\infty\neq\emptyset$, while Proposition \ref{finite-lemma} proves that $\mathcal U_f\neq\emptyset$.  
\end{proof}
{\bf Aknowledgements}\\[1mm]
This work has been partly supported by  the ANR-DFG projects “CoRoMo”
ANR-22-CE92-0077-01 and received financial support from the CNRS through the
MITI interdisciplinary programs 80 Prime.\newline
The authors would like thank Andrei A. Agrachev for very helpful discussions. 

\end{document}